\documentclass[runningheads,a4paper]{llncs}
\usepackage{amssymb}
\usepackage{graphicx}
\usepackage{url}

\setcounter{tocdepth}{3}
\urldef{\mailsa}\path|devsi.bantva@gmail.com|
\newcommand{\keywords}[1]{\par\addvspace\baselineskip
\noindent\keywordname\enspace\ignorespaces#1}

\begin{document}
\mainmatter  

\title{On hamiltonian colorings of trees}

\author{Devsi Bantva}
\institute{Lukhdhirji Engineering College, Morvi - 363 642 \\
Gujarat (INDIA) \\
\mailsa\\}

\toctitle{On hamiltonian colorings of trees}
\tocauthor{Devsi Bantva}
\maketitle

\begin{abstract}
A \emph{hamiltonian coloring} $c$ of a graph $G$ of order $n$ is a mapping $c$ : $V(G) \rightarrow \{0,1,2,...\}$ such that $D(u, v)$ + $|c(u) - c(v)|$ $\geq$ $n-1$, for every two distinct vertices $u$ and $v$ of $G$, where $D(u, v)$ denotes the detour distance between $u$ and $v$ which is the length of a longest $u,v$-path in $G$. The value \emph{hc(c)} of a hamiltonian coloring $c$ is the maximum color assigned to a vertex of $G$. The hamiltonian chromatic number, denoted by $hc(G)$, is the min\{$hc(c)$\} taken over all hamiltonian coloring $c$ of $G$. In this paper, we present a lower bound for the hamiltonian chromatic number of trees and give a sufficient condition to achieve this lower bound. Using this condition we determine the hamiltonian chromatic number of symmetric trees, firecracker trees and a special class of caterpillars.
\keywords{Hamiltonian coloring, hamiltonian chromatic number, symmetric tree, firecracker, caterpillar.}
\end{abstract}

\section{Introduction}
\qquad A \emph{hamiltonian coloring} $c$ of a graph $G$ of order $n$ is a mapping $c$ : $V(G) \rightarrow \{0,1,2,...\}$ such that $D(u, v)$ + $|c(u) - c(v)|$ $\geq$ $n-1$, for every two distinct vertices $u$ and $v$ of $G$, where $D(u, v)$ denotes the detour distance between $u$ and $v$ which is the length of a longest $u$,$v$-path in $G$. The value of $hc(c)$ of a hamiltonian coloring $c$ is the maximum color assigned to a vertex of $G$. The \emph{hamiltonian chromatic number} $hc(G)$ of $G$ is min\{$hc(c)$\} taken over all hamiltonian coloring $c$ of $G$. It is clear from definition that two vertices $u$ and $v$ can be assigned the same color only if $G$ contains a hamiltonian $u$,$v$-path, and hence a graph $G$ can be colored by a single color if and only if $G$ is hamiltonian-connected. Thus the hamiltonian chromatic number of a connected graph $G$ measures how close $G$ is to being hamiltonian-connected. The concept of hamiltonian coloring was introduced by Chartrand \emph{et al.}\cite{Cha1} which is a variation of \emph{radio $k$-coloring} of graphs.

At present, the hamiltonian chromatic number is known only for handful of graph families. Chartrand \emph{et al.}\cite{Cha1,Cha2} determined the  hamiltonian chromatic number for complete graph $K_{n}$, cycle $C_{n}$, star $K_{1,k}$, complete bipartite graph $K_{r,s}$ and presented upper bound for the hamiltonian chromatic number of paths and trees. The exact value of hamiltonian chromatic number of paths which is equal to the radio antipodal number $ac(P_{n})$ was given by Khennoufa and Togni in \cite{Togni}. Shen \emph{et al.}\cite{Shen} discussed the hamiltonian chromatic number for graphs $G$ with max\{$D(u,v)$ : $u,v \in V(G)$, $u \neq v$\} $\leq$ $n/2$, where $n$ is the order of graph $G$; such graphs are called graphs with maximum distance bound $n/2$ or $DB\left(n/2\right)$ graphs for short and they determined the hamiltonian chromatic number for double stars and a special class of caterpillars.

In this paper, we present a lower bound for the hamiltonian chromatic number of trees (Theorem \ref{lower:thm}) and give a sufficient condition to achieve this lower bound (Theorem \ref{main:thm}). Using this condition we determine the hamiltonian chromatic number of symmetric trees, firecracker trees and a special class of caterpillars. We use an approach similar to the one used in \cite{Bantva} to derive a lower bound of the hamiltonian chromatic number of trees. We remark that our proof for the hamiltonian chromatic number of a special class of caterpillars is simple than one given in \cite{Shen} by different approach. We also inform the readers that the hamiltonian chromatic number obtain in this paper is one less than that defined in \cite{Cha1,Cha2,Cha3,Cha4,Shen} as we allowed 0 for coloring while they do not.

\section{Preliminaries}

\indent\indent A \emph{tree} is a connected graph that contains no cycle. The \emph{diameter} of $T$, denoted by $diam(T)$ or simply $d$, is the maximum distance among all pairs of vertices in $T$. The \emph{eccentricity} of a vertex in a graph is the maximum distance from it to other vertices in the graph, and the \emph{center} of a graph is the set of vertices with minimum eccentricity. It is well known that the center of a tree $T$, denoted by $C(T)$, consists of a single vertex or two adjacent vertices, called the \emph{central vertex/vertices} of $T$. We view $T$ as rooted at its central vertex/vertices; if $T$ has only one central vertex $w$ then $T$ is rooted at $w$ and if $T$ has two adjacent central vertices $w$ and $w^{'}$ then $T$ is rooted at $w$ and $w^{'}$ in the sense that both $w$ and $w^{'}$ are at level 0. If $u$ is on the path joining another vertex $v$ and central vertex $w$, then $u$ is called \emph{ancestor} of $v$, and $v$ is a \emph{descendent} of $u$. Let $u \not\in C(T)$ be adjacent to a central vertex. The subtree induced by $u$ and all its descendent is called a \emph{branch} at $u$. Two branches are called \emph{different} if they are at two vertices adjacent to the same central vertex, and \emph{opposite} if they are at two vertices adjacent to different central vertices. Define the \emph{detour level} of a vertex $u$ from the center of graph by
\begin{eqnarray*}
\mathcal{L}(u) := min\{D(u,w):w \in C(T)\}, u \in V(T).
\end{eqnarray*}
Define the \emph{total detour level} of $T$ as
\begin{eqnarray*}
\mathcal{L}(T) := \displaystyle\sum_{u \in V(T)}\mathcal{L}(u).
\end{eqnarray*}
For any $u, v \in V(T)$, define $\phi(u,v)$ := max\{$\mathcal{L}(t) : t$ is a common ancestor of $u$ and $v$\}, and
\begin{eqnarray*}
\delta(u,v) := \left\{
\begin{array}{ll}
1, & \mbox{if $C(T)$ = \{$w,w^{'}$\} and path $P_{uv}$ contains an edge $ww^{'}$}, \\ [0.3cm]
0, & \mbox{otherwise.}
\end{array}
\right.
\end{eqnarray*}
\begin{lemma} Let $T$ be a tree with diameter $d \geq 2$. Then for any $u, v \in V(T)$ the following holds:
\begin{enumerate}
  \item $\phi(u,v) \geq 0$;
  \item $\phi(u,v) = 0$ if and only if $u$ and $v$ are in different or opposite branches;
  \item $\delta(u,v)$ = 1 if and only if $T$ has two central vertices and $u$ and $v$ are in opposite branches;
  \item the detour distance $D(u,v)$ in $T$ between $u$ and $v$ can be expressed as
  \begin{equation} D(u,v) = \mathcal{L}(u) + \mathcal{L}(v) - 2 \phi(u,v) + \delta(u,v).\label{eq:dist}\end{equation}
\end{enumerate}
\end{lemma}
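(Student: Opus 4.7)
The plan is to establish parts (1)--(3) as quick consequences of the rooting convention, and then to deduce the distance formula (4) by a case analysis on how $u$ and $v$ are situated relative to the center of $T$. A key fact I will use throughout is that in a tree the detour distance $D(u,v)$ coincides with the ordinary path distance $d(u,v)$, since there is a unique $u,v$-path.

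For (1), I would observe that when $C(T)=\{w\}$, the vertex $w$ lies on the unique path from any vertex to the center, hence is a common ancestor of $u$ and $v$; since $\mathcal{L}(w)=0$ we get $\phi(u,v)\geq 0$. When $C(T)=\{w,w'\}$, if $u$ and $v$ lie in the same branch or in different branches at the same central vertex, that central vertex is again a common ancestor at level $0$; if they lie in opposite branches, the two centers serve as common ancestors at level $0$ by the rooting convention, and again $\phi(u,v)\geq 0$. For (2), note that $\mathcal{L}(t)=0$ iff $t\in C(T)$, so $\phi(u,v)=0$ iff every common ancestor of $u,v$ is central, which is exactly the case where $u,v$ lie in different or opposite branches; conversely, if $u,v$ lie in the same branch at some $x$ adjacent to a center, then $x$ is a common ancestor with $\mathcal{L}(x)\geq 1$, forcing $\phi(u,v)\geq 1$. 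Part (3) is immediate: the edge $ww'$ appears on the unique $u,v$-path iff $u$ and $v$ are in the two components of $T-ww'$, and by the two-center rooting this is exactly the opposite-branch case.

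For (4), I would split into three cases matching the above. If $u,v$ lie in the same branch, let $t$ be their lowest common ancestor; then $\mathcal{L}(t)=\phi(u,v)$, the unique $u,v$-path ascends from $u$ to $t$ and descends from $t$ to $v$, and $d(u,v)=(\mathcal{L}(u)-\mathcal{L}(t))+(\mathcal{L}(v)-\mathcal{L}(t))=\mathcal{L}(u)+\mathcal{L}(v)-2\phi(u,v)$ with $\delta(u,v)=0$. If $u,v$ lie in different branches at a common central vertex $w$, the path passes through $w$ only, so $d(u,v)=\mathcal{L}(u)+\mathcal{L}(v)$ while $\phi(u,v)=\delta(u,v)=0$. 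Finally, if $u,v$ lie in opposite branches at centers $w$ and $w'$, then $\mathcal{L}(u)=D(u,w)$ and $\mathcal{L}(v)=D(v,w')$, the path traverses the edge $ww'$, and $d(u,v)=\mathcal{L}(u)+1+\mathcal{L}(v)$, matching the formula since $\phi(u,v)=0$ and $\delta(u,v)=1$.

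The main obstacle is fixing the convention in the two-center case so that \emph{both} $w$ and $w'$ behave as common ancestors (each at level $0$) of vertices lying in opposite branches; once this is pinned down using the statement that both $w$ and $w'$ are at level $0$, the arguments for (1)--(3) reduce to reading off the definitions, and (4) becomes a routine tree computation in each of the three configurations.
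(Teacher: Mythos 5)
The paper states this lemma in the Preliminaries without any proof (it is treated as routine, following the analogous decomposition in the cited radio-number paper), so there is no authorial argument to compare yours against; what you have written supplies the missing justification, and it is correct in substance and follows the only natural route: read off (1)--(3) from the rooting convention, then verify (4) by cases on the position of $u$ and $v$ relative to the center. One small point deserves attention before this could stand as a complete proof. Your case split for (4) --- same branch, different branches, opposite branches --- does not literally cover pairs in which one of $u,v$ is itself a central vertex (or, more generally, an ancestor of the other), since a central vertex lies in no branch. The formula still holds there, but it needs a separate one-line check: e.g.\ with $C(T)=\{w,w'\}$, $u=w$ and $v$ in a branch on the $w'$ side gives $D(u,v)=1+\mathcal{L}(v)$, $\phi(u,v)=0$, $\delta(u,v)=1$, which matches; similarly $D(w,w')=1$ is accounted for by $\delta(w,w')=1$. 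This same boundary case shows that part (3) as stated in the paper is slightly imprecise (the pair $\{w,w'\}$ has $\delta=1$ without lying in opposite branches), an imprecision your proof inherits rather than introduces; since the paper only ever uses $\delta\le 1$ and identity (4) in its later arguments, nothing downstream is affected. With that edge case noted, your argument is complete and correct.
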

Note that for a tree $T$ the detour distance $D(u,v)$ is same as the ordinary distance $d(u,v)$ as there is unique path between any two vertices $u$ and $v$ of $T$. Thus, one can use expression (\ref{eq:dist}) for ordinary distance $d(u,v)$ which can also be used for other purpose.

Define
\begin{eqnarray*}
\varepsilon(T) := \left\{
\begin{array}{ll}
0, & \mbox{if $C(T) = \{w\}$}, \\ [0.3cm]
1, & \mbox{if $C(T) = \{w,w^{'}\}$}.
\end{array}
\right.
\end{eqnarray*}
\begin{eqnarray*}
\varepsilon^{'}(T) := 1 - \varepsilon(T).
\end{eqnarray*}

\section{On Hamiltonian colorings of trees}

\indent\indent For a connected graph $G$ of order $n \geq 5$, by defining $D(\sigma)$ = $\sum_{i=1}^{n-1}D(v_{i},v_{i+1})$ for an ordering $\sigma$ : $v_{1}$, $v_{2}$,$....$,$v_{n}$ and $D(G)$ = max\{$D(\sigma)$ : $\sigma$ is an ordering of $V(G)$\}, Chartrand \emph{et al.}\cite{Cha3} established the following lower bound for the hamiltonian chromatic number of a connected graph $G$.

\begin{theorem}(\cite{Cha3}) If $G$ is a connected graph of order $n \geq 5$, then $hc(G) \geq (n-1)^{2} + 1 - D(G)$.
\end{theorem}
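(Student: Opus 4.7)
The plan is to produce the bound by examining an arbitrary hamiltonian coloring, reading off the vertices in non\-decreasing color order, and then telescoping the defining inequality $D(u,v) + |c(u)-c(v)| \geq n-1$ across consecutive pairs of this ordering.

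First I would fix any hamiltonian coloring $c$ of $G$ and relabel the vertices as $v_{1},v_{2},\ldots,v_{n}$ so that $c(v_{1}) \leq c(v_{2}) \leq \cdots \leq c(v_{n})$. This relabeling gives an ordering $\sigma$ of $V(G)$, so by definition of $D(G)$ we have $\sum_{i=1}^{n-1} D(v_{i},v_{i+1}) = D(\sigma) \leq D(G)$. Because every detour distance in a graph of order $n$ satisfies $D(v_i,v_{i+1}) \leq n-1$, the coloring condition applied to the consecutive pair $v_{i},v_{i+1}$ rewrites as
\begin{equation*}
c(v_{i+1}) - c(v_i) = |c(v_{i+1}) - c(v_i)| \geq (n-1) - D(v_i,v_{i+1}) \geq 0.
\end{equation*}

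Next I would telescope these $n-1$ inequalities, obtaining
\begin{equation*}
c(v_{n}) - c(v_{1}) = \sum_{i=1}^{n-1} \bigl(c(v_{i+1}) - c(v_{i})\bigr) \geq (n-1)^{2} - \sum_{i=1}^{n-1} D(v_{i},v_{i+1}) \geq (n-1)^{2} - D(G).
\end{equation*}
Since this theorem is stated under the convention of Chartrand \emph{et al.} (colors starting at $1$, as flagged in the introduction), we have $c(v_{1}) \geq 1$, and therefore $hc(c) = c(v_{n}) \geq (n-1)^{2} + 1 - D(G)$. Taking the minimum over all hamiltonian colorings $c$ yields $hc(G) \geq (n-1)^{2} + 1 - D(G)$, as required.

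There is no real obstacle here: the argument is a one\-line telescoping sum once the vertices are sorted by color. The only points that need care are verifying that sorting by color produces a legitimate ordering of $V(G)$ (which is immediate) and confirming that the right\-hand side $(n-1) - D(v_i,v_{i+1})$ is nonnegative so that the absolute value can be dropped without loss. Both of these are painless, and no properties of $G$ beyond the definitions of $D(G)$ and of a hamiltonian coloring are invoked.
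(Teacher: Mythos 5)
Your proof is correct. The paper only cites this result from Chartrand et al.\ without reproducing a proof, but your argument --- sort the vertices by color, apply the hamiltonian coloring condition to consecutive pairs, and telescope to get $\mathrm{span}(c) \geq (n-1)^2 - D(\sigma) \geq (n-1)^2 - D(G)$, then add $1$ for the Chartrand convention that colors start at $1$ --- is exactly the standard argument and is the same telescoping technique the paper itself uses to prove its own lower bound for trees (Theorem~\ref{lower:thm}), so there is nothing to flag.
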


For an ordering $\sigma$ : $v_{1}$, $v_{2}$,$....$,$v_{n}$ of the vertices of $G$, define $c_{\sigma}$ to be an assignment of positive integers to $V(G)$: $c_{\sigma}(v_{1})$ = 1 and $c_{\sigma}(v_{i+1})-c_{\sigma}(v_{i})$ = $(n-1)-D(v_{i},v_{i+1})$ for each $1 \leq i \leq n-1$. If max\{$D(u,v)$ : $u, v \in V(G), u \neq v$\} $\leq n/2$ for a connected graph $G$ of order $n$ then such a graph $G$ is called a graph with \emph{maximum distance bound $n/2$} or $DB(n/2)$ graph for short. Shen \emph{et al.}\cite{Shen} proved the following Theorems about $DB(n/2)$ graphs and using it determined the hamiltonian chromatic number for double stars and a special class of caterpillars.

\begin{theorem}(\cite{Shen}) Let $G$ be a $DB(n/2)$ graph of order $n \geq 4$. Then for any $\sigma$, $c_{\sigma}$ is a hamiltonian coloring for $G$ with hc$(c_{\sigma})$ = $(n-1)^{2} + 1 - D(\sigma)$.
\end{theorem}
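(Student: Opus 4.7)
The plan is to verify the two assertions of the theorem separately: (i) that $c_\sigma$ is indeed a hamiltonian coloring, and (ii) that its span equals $(n-1)^2 + 1 - D(\sigma)$. The span computation is immediate: telescoping the recurrence from the definition gives
\begin{eqnarray*}
c_\sigma(v_n) = c_\sigma(v_1) + \sum_{i=1}^{n-1}\bigl((n-1)-D(v_i,v_{i+1})\bigr) = 1 + (n-1)^2 - D(\sigma).
\end{eqnarray*}
Because each increment $(n-1)-D(v_i,v_{i+1}) \geq (n-1) - n/2 > 0$, the sequence $c_\sigma(v_1),\ldots,c_\sigma(v_n)$ is strictly increasing, so $hc(c_\sigma) = c_\sigma(v_n)$ as claimed.

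For the coloring condition, fix $1 \leq i < j \leq n$ and, using the increasing property, write
\begin{eqnarray*}
|c_\sigma(v_j)-c_\sigma(v_i)| = \sum_{l=i}^{j-1}\bigl((n-1)-D(v_l,v_{l+1})\bigr) = (j-i)(n-1) - \sum_{l=i}^{j-1}D(v_l,v_{l+1}).
\end{eqnarray*}
We have to show $D(v_i,v_j) + |c_\sigma(v_j)-c_\sigma(v_i)| \geq n-1$, equivalently
\begin{eqnarray*}
\sum_{l=i}^{j-1}D(v_l,v_{l+1}) - D(v_i,v_j) \leq (j-i-1)(n-1).
\end{eqnarray*}
If $j-i=1$ the two sides are $0$, so equality holds; this is the tight case that drives the construction. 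Hence the only real content is the case $j - i = k \geq 2$, which I would handle by invoking the $DB(n/2)$ hypothesis in a single estimate.

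For $k \geq 2$, the $DB(n/2)$ bound gives $\sum_{l=i}^{j-1}D(v_l,v_{l+1}) \leq kn/2$, and $v_i \neq v_j$ gives $D(v_i,v_j) \geq 1$. Substituting, the required inequality reduces to $kn/2 - 1 \leq (k-1)(n-1)$, i.e.\ $(k-2)(n-2) \geq 0$, which holds for all $k \geq 2$ and $n \geq 4$. The main obstacle, such as it is, is spotting that this algebraic simplification is exactly what the two-vertex distance bound $n/2$ is designed to make work: every consecutive detour can be at most $n/2$, so on any stretch of length $k \geq 2$ the accumulated ``slack'' of $k(n/2)$ is comfortably absorbed by the $k(n-1)$ of color room minus the mandatory $(n-1)$ on the endpoints. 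With both (i) and (ii) established, the theorem follows.
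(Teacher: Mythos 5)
Your proof is correct. Note that the paper itself states this result without proof, quoting it from Shen \emph{et al.}\ \cite{Shen}; so there is no in-paper argument to compare against directly. However, your argument --- telescoping the recurrence to get $|c_\sigma(v_j)-c_\sigma(v_i)| = (j-i)(n-1) - \sum_{l=i}^{j-1} D(v_l,v_{l+1}) \geq (j-i)(n-2)/2 \geq n-2$ for $j-i\geq 2$ via the $DB(n/2)$ bound, and then adding $D(v_i,v_j)\geq 1$ --- is exactly the computation the author reuses in the proof of Theorem \ref{main:thm}, so you have in effect reconstructed the standard argument behind both results.
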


\begin{theorem}(\cite{Shen}) If $G$ is $DB(n/2)$ graph of order $n \geq 5$, then $hc(G)$ = $(n-1)^{2} + 1 - D(G)$, and for any $\sigma$ such that $D(\sigma)$ = $D(G)$, hc($c_{\sigma}$) = hc($G$). Namely, $c_{\sigma}$ is a minimum hamiltonian coloring for $G$.
\end{theorem}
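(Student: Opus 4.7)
The plan is to establish the theorem by a sandwich argument that pairs the universal lower bound of Theorem 2 with the explicit coloring construction of Theorem 4. Since Theorem 2 already gives $hc(G) \geq (n-1)^{2} + 1 - D(G)$ for every connected graph of order $n \geq 5$, no separate lower-bound work is required; the entire task reduces to exhibiting a hamiltonian coloring whose span equals $(n-1)^{2} + 1 - D(G)$.

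For the matching upper bound, I would first note that since $V(G)$ is finite there are only finitely many orderings, so the maximum $D(G) = \max_{\sigma}D(\sigma)$ is attained by some ordering $\sigma^{\ast}: v_{1}, v_{2}, \ldots, v_{n}$. Because $G$ is a $DB(n/2)$ graph of order $n \geq 5 \geq 4$, Theorem 4 applies to $\sigma^{\ast}$ and guarantees that $c_{\sigma^{\ast}}$ is a hamiltonian coloring of $G$ with $hc(c_{\sigma^{\ast}}) = (n-1)^{2} + 1 - D(\sigma^{\ast}) = (n-1)^{2} + 1 - D(G)$. By the definition of $hc(G)$ as a minimum over all hamiltonian colorings, this yields $hc(G) \leq (n-1)^{2} + 1 - D(G)$, which together with Theorem 2 forces equality. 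The second assertion of the theorem is then immediate: for \emph{any} $\sigma$ with $D(\sigma) = D(G)$, Theorem 4 produces a valid hamiltonian coloring $c_{\sigma}$ of span $(n-1)^{2} + 1 - D(G) = hc(G)$, so $c_{\sigma}$ is by definition a minimum hamiltonian coloring of $G$.

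Honestly, there is no real obstacle here; the substantive content lives in Theorems 2 and 4, and the present theorem is a direct corollary obtained by feeding an optimising $\sigma$ into Theorem 4 and comparing with Chartrand's lower bound. The only items meriting a sentence of care in the write-up are (i) observing that the hypothesis $n \geq 5$ subsumes the $n \geq 4$ needed by Theorem 4, and (ii) noting that $D(G)$ is a genuine maximum (achieved) rather than merely a supremum, so that the ordering $\sigma^{\ast}$ actually exists and may be plugged into the construction.
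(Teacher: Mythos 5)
Your proposal is correct: the sandwich argument combining the Chartrand \emph{et al.} lower bound with the $c_{\sigma}$ construction applied to an ordering attaining $D(G)$ is exactly the intended derivation, and your two points of care (that $n\geq 5$ covers the $n\geq 4$ hypothesis, and that the maximum over the finitely many orderings is attained) are the only details that need stating. Note that the paper itself offers no proof of this statement --- it is quoted verbatim from Shen \emph{et al.} as background --- so there is nothing to compare against beyond confirming that your reconstruction is the standard one and is sound.
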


Now, let $T$ be a tree with maximum degree $\Delta$. Note that a hamiltonian coloring $c$ of $T$ is injective for $\Delta(T) \geq 3$ as in this case no two vertices of $T$ contain hamiltonian path. Throughout this section we consider $T$ with $\Delta(T) \geq 3$ then $c$ induces a linear order of the vertices of $T$, namely $V(T)$ = \{$u_{0},u_{1},...,u_{n-1}$\} (where $n$ = $|V(T)|$) such that
\begin{center}
0 = $c(u_{0}) < c(u_{1}) < ... < c(u_{n-1})$ = span($c$).
\end{center}

\begin{theorem} \label{lower:thm} Let $T$ be a tree of order $n \geq 4$ and $\Delta(T) \geq 3$. Then
\begin{equation} hc(T) \geq (n-1)(n-1-\varepsilon(T)) + \varepsilon^{'}(T) - 2\mathcal{L}(T). \label{eq:lower}\end{equation}.
\end{theorem}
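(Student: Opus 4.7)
The plan is to fix an arbitrary hamiltonian coloring $c$ of $T$, exploit the injectivity of $c$ (guaranteed by $\Delta(T) \geq 3$) to obtain the color-ordering $V(T) = \{u_0,u_1,\ldots,u_{n-1}\}$ with $0=c(u_0)<c(u_1)<\cdots<c(u_{n-1})=hc(c)$, and then telescope the hamiltonian condition applied to each consecutive pair. The coloring condition gives $c(u_{i+1})-c(u_i) \geq (n-1)-D(u_i,u_{i+1})$ for $0 \leq i \leq n-2$, and summing over $i$ yields
\begin{equation*}
hc(c) \;\geq\; (n-1)^{2} - \sum_{i=0}^{n-2} D(u_i,u_{i+1}).
\end{equation*}

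Next I would substitute the identity $D(u_i,u_{i+1}) = \mathcal{L}(u_i)+\mathcal{L}(u_{i+1})-2\phi(u_i,u_{i+1})+\delta(u_i,u_{i+1})$ from equation~(\ref{eq:dist}). Each interior level $\mathcal{L}(u_i)$ appears twice in the resulting sum while $\mathcal{L}(u_0)$ and $\mathcal{L}(u_{n-1})$ appear once, giving $\sum_{i=0}^{n-2}[\mathcal{L}(u_i)+\mathcal{L}(u_{i+1})] = 2\mathcal{L}(T)-\mathcal{L}(u_0)-\mathcal{L}(u_{n-1})$. Dropping the non-negative $\phi$-terms only increases the right-hand side, so after rearranging I obtain
\begin{equation*}
hc(c) \;\geq\; (n-1)^{2} - 2\mathcal{L}(T) + \mathcal{L}(u_0) + \mathcal{L}(u_{n-1}) - \sum_{i=0}^{n-2}\delta(u_i,u_{i+1}).
\end{equation*}

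Finally I would split into two cases according to $|C(T)|$. If $|C(T)|=1$, then $\varepsilon(T)=0$, $\varepsilon'(T)=1$, and $\delta\equiv 0$, so the $\delta$-sum disappears; the key observation is that since $u_0 \neq u_{n-1}$ and only the unique central vertex $w$ has detour level $0$, at least one of $u_0,u_{n-1}$ differs from $w$, giving $\mathcal{L}(u_0)+\mathcal{L}(u_{n-1}) \geq 1 = \varepsilon'(T)$, which delivers the bound. If $|C(T)|=2$, then $\varepsilon(T)=1$, $\varepsilon'(T)=0$; using the trivial estimates $\mathcal{L}(u_0),\mathcal{L}(u_{n-1}) \geq 0$ and $\sum\delta \leq n-1$ (there are $n-1$ consecutive pairs, each contributing at most $1$) gives $hc(c) \geq (n-1)^{2}-(n-1)-2\mathcal{L}(T) = (n-1)(n-1-\varepsilon(T))-2\mathcal{L}(T)$, which is what is needed. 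Since $c$ was arbitrary, the inequality passes to $hc(T)$.

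The main obstacle is reconciling the $\delta$-sum in the two-center case with the target expression: the bound $\sum\delta \leq n-1$ looks wasteful, but the shape of the claimed inequality (the $-(n-1)\varepsilon(T)$ correction) shows that precisely this crude estimate is what the theorem is calibrated for, so no finer analysis of $\delta$ or of $\phi$ is required for the lower bound. The single-center case, by contrast, needs the small but essential observation that two distinct vertices cannot both sit at the unique center, which is what produces the $\varepsilon'(T)$ term.
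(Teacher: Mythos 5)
Your proposal is correct and follows essentially the same route as the paper: telescope the hamiltonian condition over the color-ordering, substitute the level decomposition of the detour distance, drop the non-negative $\phi$-terms, and then split on $|C(T)|$ using $\mathcal{L}(u_0)+\mathcal{L}(u_{n-1})\geq 1$ in the one-center case and $\sum\delta\leq n-1$ in the two-center case. If anything, your write-up is slightly cleaner than the paper's (correct summation range $0\leq i\leq n-2$ and correct sign on the $\delta$-sum).
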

\begin{proof} It is enough to prove that any hamiltonian coloring of $T$ has span not less than the right-hand side of (\ref{eq:lower}). Suppose $c$ is any hamiltonian coloring of $T$ then $c$ order the vertices of $T$ into a linear order $u_{0}$, $u_{1}$,...,$u_{n-1}$ such that 0 = $c(u_{0}) < c(u_{1}) < ... < c(u_{n-1})$. By definition of $c$, we have $c(u_{i+1}) - c(u_{i}) \geq n - 1 - D(u_{i}, u_{i+1})$ for $0 \leq i \leq n-1$. Summing up these $n-1$ inequalities, we obtain
\begin{equation} \mbox{span}(c) = c(u_{n-1}) \geq (n-1)^{2} - \displaystyle\sum_{i=0}^{n-1} D(u_{i},u_{i+1}) \label{eq:sumup}\end{equation}
\textsf{Case-1:} $T$ has one central vertex.~~In this case, we have $\phi(u_{i},u_{i+1}) \geq 0$ and $\delta(u_{i},u_{i+1})$ = 0 for $0 \leq i \leq n-2$ by the definition of the function $\phi$ and $\delta$. Since $T$ has only one central vertex, $u_{0}$ and $u_{n-1}$ cannot be the central vertex of $T$ simultaneously. Hence $\mathcal{L}(u_{0}) + \mathcal{L}(u_{n-1}) \geq 1$. Thus, by substituting (\ref{eq:dist}) in (\ref{eq:sumup}),
\begin{eqnarray*}
\mbox{span}(c) &\geq& (n-1)^{2} - \displaystyle\sum_{i=0}^{n-1} \left[\mathcal{L}(u_{i}) + \mathcal{L}(u_{i+1}) - 2 \phi(u_{i},u_{i+1}) + \delta(u_{i},u_{i+1})\right] \\
&=& (n-1)^{2} - 2\displaystyle\sum_{i=0}^{n-1} \mathcal{L}(u_{i}) + \mathcal{L}(u_{0}) + \mathcal{L}(u_{n-1}) - 2 \displaystyle\sum_{i=0}^{n-1} \phi(u_{i},u_{i+1}) \\
&\geq& (n-1)^{2} + 1 - 2 \mathcal{L}(T) \\
&=& (n-1)(n-1-\varepsilon(T)) + \varepsilon^{'}(T) - 2 \mathcal{L}(T).
\end{eqnarray*}
\noindent\textsf{Case-2:} $T$ has two central vertices.~~In this case, we have $\phi(u_{i},u_{i+1}) \geq 0$ and $\delta(u_{i},u_{i+1}) \leq 1$ for $0 \leq i \leq n-2$ by the definition of the function $\phi$ and $\delta$. Since $T$ has two central vertices, we can set $\{u_{0},u_{n-1}\}$ = $\{w,w^{'}\}$. Thus, by substituting (\ref{eq:dist}) in (\ref{eq:sumup}),
\begin{eqnarray*}
\mbox{span}(c) &\geq& (n-1)^{2} - \displaystyle\sum_{i=0}^{n-1} \left[\mathcal{L}(u_{i}) + \mathcal{L}(u_{i+1}) - 2 \phi(u_{i},u_{i+1}) + \delta(u_{i},u_{i+1})\right] \\
&=& (n-1)^{2} - 2 \displaystyle\sum_{i=0}^{n-1} [\mathcal{L}(u_{i}) + \mathcal{L}(u_{i+1})] - 2 \displaystyle\sum_{i=0}^{n-1} \phi(u_{i},u_{i+1}) + \displaystyle\sum_{i=0}^{n-1} \delta(u_{i},u_{i+1}) \\
&=& (n-1)^{2} - 2 \displaystyle\sum_{i=0}^{n-1} \mathcal{L}(u_{i}) + \mathcal{L}(u_{0}) + \mathcal{L}(u_{n-1}) + \displaystyle\sum_{i=0}^{n-1} \delta(u_{i},u_{i+1}) \\
&\geq& (n-1)^{2} - 2 \displaystyle\sum_{u \in V(T)} \mathcal{L}(u_{i}) + (n-1) \\
&=& (n-1)(n-2) - 2\mathcal{L}(T) \\
&=& (n-1)(n-1-\varepsilon(T)) + \varepsilon^{'}(T) - 2\mathcal{L}(T).
\end{eqnarray*}
\end{proof}

\begin{theorem} \label{main:thm} Let $T$ be a tree of order $n \geq 4$ and $\Delta(T) \geq 3$. Then
\begin{equation} hc(T) = (n-1)(n-1-\varepsilon(T)) + \varepsilon^{'}(T) - 2\mathcal{L}(T) \label{eq:main}\end{equation} holds if there exists a linear order $u_{0}$, $u_{1}$,...,$u_{n-1}$ with 0 = $c(u_{0}) < c(u_{1}) < ... < c(u_{n-1})$ of the vertices of $T$ such that
\begin{enumerate}
   \item $u_{0}$ = $w$, $u_{n-1} \in N(w)$ when $C(T)$ = $\{w\}$ and $\{u_{0},u_{n-1}\}$ = $\{w,w^{'}\}$ when $C(T)$ = $\{w,w^{'}\}$,
   \item $u_{i}$ and $u_{i+1}$ are in different branches when $C(T)$ = $\{w\}$ and opposite branches when $C(T)$ = $\{w,w^{'}\}$,
   \item $D(u_{i},u_{i+1}) \leq n/2$, for $0 \leq i \leq n-2$.
 \end{enumerate}
Moreover, under these conditions the mapping $c$ defined by
\begin{equation} c(u_{0}) = 0 \label{eq:f0} \end{equation}
\begin{equation} c(u_{i+1}) = c(u_{i}) + n - 1 - \mathcal{L}(u_{i}) - \mathcal{L}(u_{i+1}) - \varepsilon(T), 0 \leq i \leq n-2 \label{eq:f1} \end{equation}
is an optimal hamiltonian coloring of $T$.
\end{theorem}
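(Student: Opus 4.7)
The plan is to show that the function $c$ defined by (\ref{eq:f0})--(\ref{eq:f1}) is a valid hamiltonian coloring whose span attains the lower bound of Theorem \ref{lower:thm}; equality in (\ref{eq:main}) and optimality of $c$ then follow at once. I would decompose the argument into three tasks: compute $D(u_i,u_{i+1})$ in closed form from the hypotheses, check the hamiltonian inequality on all pairs, and telescope to read off the span.

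First, I would apply formula (\ref{eq:dist}) to each consecutive pair. Condition~(2) places $u_i$ and $u_{i+1}$ in different (when $C(T)=\{w\}$) or opposite (when $C(T)=\{w,w'\}$) branches, so item~2 of the earlier lemma yields $\phi(u_i,u_{i+1})=0$ and item~3 yields $\delta(u_i,u_{i+1})=\varepsilon(T)$; the boundary pairs prescribed by condition~(1) fit the same pattern directly from the definitions. Hence
\[
D(u_i,u_{i+1}) \;=\; \mathcal{L}(u_i)+\mathcal{L}(u_{i+1})+\varepsilon(T),
\]
and the recursion (\ref{eq:f1}) is precisely $c(u_{i+1})-c(u_i)=(n-1)-D(u_i,u_{i+1})$, so the hamiltonian inequality holds with equality on every consecutive pair. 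For non-consecutive pairs $u_i,u_j$ with $j\ge i+2$, telescoping gives $c(u_j)-c(u_i)=\sum_{k=i}^{j-1}[(n-1)-D(u_k,u_{k+1})]$; condition~(3) bounds each summand below by $(n-1)-n/2=(n-2)/2$, so the gap is at least $n-2$. Since $u_i\ne u_j$ in a tree forces $D(u_i,u_j)\ge 1$, the hamiltonian inequality follows, and $c$ is a valid hamiltonian coloring.

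Finally, summing the recursion and substituting the expression for $D(u_i,u_{i+1})$ yields
\[
\mbox{span}(c) \;=\; (n-1)^{2}-(n-1)\varepsilon(T)-2\mathcal{L}(T)+\mathcal{L}(u_0)+\mathcal{L}(u_{n-1}),
\]
and condition~(1) makes the boundary term $\mathcal{L}(u_0)+\mathcal{L}(u_{n-1})$ equal $0+1$ when $C(T)=\{w\}$ and $0+0$ when $C(T)=\{w,w'\}$, i.e.\ $\varepsilon'(T)$ in both cases. The right-hand side then collapses to the lower bound of Theorem \ref{lower:thm}, completing the proof. The main obstacle is the non-consecutive case: without the $n/2$ cap in condition~(3) the per-step color gain could drop below $(n-2)/2$ and the telescoping estimate would be too weak to salvage the hamiltonian inequality, which is precisely why condition~(3) is indispensable and mirrors the $DB(n/2)$-type hypothesis used in \cite{Shen}.
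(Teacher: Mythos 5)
Your proposal is correct and follows essentially the same route as the paper: use conditions (1)--(2) with formula (\ref{eq:dist}) to identify $D(u_i,u_{i+1})=\mathcal{L}(u_i)+\mathcal{L}(u_{i+1})+\varepsilon(T)$, verify the hamiltonian condition by telescoping together with the $n/2$ cap of condition (3) and $D(u_i,u_j)\geq 1$, and then compute the span and match it to the lower bound of Theorem \ref{lower:thm}. Your write-up is in fact slightly more careful than the paper's in spelling out why $\phi=0$ and $\delta=\varepsilon(T)$ on consecutive pairs and in treating the consecutive-pair case of the hamiltonian inequality explicitly.
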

\begin{proof} Suppose that a linear order $u_{0}, u_{1},...,u_{n-1}$ of the vertices of $T$ satisfies the conditions (1), (2) and (3) of hypothesis, and $c$ is defined by (\ref{eq:f0}) and (\ref{eq:f1}). By Theorem \ref{lower:thm}, it is enough to prove that $c$ is a hamiltonian coloring whose span is equal to $c(u_{n-1})$ = $(n-1)(n-1-\varepsilon(T)) + \varepsilon^{'}(T) - 2\mathcal{L}(T)$.

Let $c$ is defined by (\ref{eq:f0}) and (\ref{eq:f1}). Without loss of generality we assume that $j-i \geq 2$. Then
\begin{eqnarray*}
c(u_{j}) - c(u_{i}) &=& \displaystyle\sum_{t = i}^{j-1} [c(u_{t+1}) - c(u_{t})] \\
&=& \displaystyle\sum_{t = i}^{j-1} [n - 1 - \mathcal{L}(u_{t}) - \mathcal{L}(u_{t+1}) - \varepsilon(T)] \\
&=& \displaystyle\sum_{t = i}^{j-1} [n - 1 - D(u_{t}, u_{t+1})] \\
&=& (j - i)(n - 1) - \displaystyle\sum_{t = i}^{j-1} D(u_{t}, u_{t+1}) \\
&\geq& (j - i)(n - 1) - (j - i)\left(\frac{n}{2}\right) \\
&=& (j - i)\left(\frac{n - 2}{2}\right) \\
&\geq& n - 2
\end{eqnarray*}

\noindent Note that $D(u_{i}, u_{j}) \geq 1$; it follows that $|c(u_{j}) - c(u_{i})| + D(u_{i}, u_{j}) \geq n - 1$. Hence, $c$ is a hamiltonian coloring for $T$. The span of $c$ is given by
\begin{eqnarray*}
\mbox{span}(c) &=& c(u_{n-1}) - c(u_{0}) \\
&=& \displaystyle\sum_{t = 0}^{n-2} [c(u_{t+1}) - c(u_{t})] \\
&=&\displaystyle\sum_{t = 0}^{n-2} [n - 1 - \mathcal{L}(u_{t}) - \mathcal{L}(u_{t+1}) - \varepsilon(T)] \\
&=& (n-1)^{2} - \displaystyle\sum_{t = 0}^{n-2} [\mathcal{L}(u_{t}) + \mathcal{L}(u_{t+1})] - (n-1)\varepsilon(T) \\
&=& (n-1)(n-1-\varepsilon(T)) - 2\mathcal{L}(T) + \mathcal{L}(u_{0}) + \mathcal{L}(u_{n-1}) \\
&=& (n-1)(n-1-\varepsilon(T)) + \varepsilon^{'}(T) - 2\mathcal{L}(T)
\end{eqnarray*}
Therefore, $hc(T) \leq (n-1)(n-1-\varepsilon(T)) + \varepsilon^{'}(T) - 2\mathcal{L}(T)$. This together with (\ref{eq:lower}) implies (\ref{eq:main}) and that $c$ is an optimal hamiltonian coloring.
\end{proof}

\begin{corollary} \label{dbp2:thm}Let $T$ be a $DB(n/2)$ tree (or $d \leq n/2$) of order $n \geq 4$ and $\Delta(T) \geq 3$, where $d$ is diameter of $T$. Then
\begin{equation}\label{eq:dbp2} hc(T) = (n-1)(n-1-\varepsilon(T))+\varepsilon^{'}(T)-2\mathcal{L}(T) \end{equation}
holds if there exists a linear order $u_{0}$, $u_{1}$,...,$u_{n-1}$ with 0 = $c(u_{0}) < c(u_{1}) < ... < c(u_{n-1})$ of the vertices of $T$ such that
\begin{enumerate}
   \item $u_{0}$ = $w$, $u_{n-1} \in N(w)$ when $C(T)$ = $\{w\}$ and $\{u_{0},u_{n-1}\}$ = $\{w,w^{'}\}$ when $C(T)$ = $\{w,w^{'}\}$,
   \item $u_{i}$ and $u_{i+1}$ are in different branches when $C(T)$ = $\{w\}$ and opposite branches when $C(T)$ = $\{w,w^{'}\}$.
 \end{enumerate}
Moreover, under these conditions the mapping $c$ defined by
\begin{equation} c(u_{0}) = 0 \label{eq:f00}\end{equation}
\begin{equation} c(u_{i+1}) = c(u_{i}) + n - 1 - \mathcal{L}(u_{i}) - \mathcal{L}(u_{i+1}) - \varepsilon(T), 0 \leq i \leq n-2 \label{eq:f11}\end{equation}
is an optimal hamiltonian coloring of $T$.
\end{corollary}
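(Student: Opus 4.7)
The plan is to derive Corollary \ref{dbp2:thm} as an immediate consequence of Theorem \ref{main:thm}. Comparing the two statements, the hypothesis of the corollary drops condition (3) of the theorem, namely $D(u_{i},u_{i+1}) \leq n/2$ for $0 \leq i \leq n-2$. The only thing to check, therefore, is that under the $DB(n/2)$ assumption this condition holds automatically for every linear order of the vertices.

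I would first recall that in a tree the detour distance coincides with the ordinary distance, and hence $\max\{D(u,v) : u,v \in V(T),\ u \neq v\}$ is exactly the diameter $d$. Thus the hypothesis $d \leq n/2$ (equivalently, $T$ being a $DB(n/2)$ tree) gives $D(u,v) \leq n/2$ for every pair of distinct vertices. In particular $D(u_{i},u_{i+1}) \leq n/2$ for every $i$, so condition (3) of Theorem \ref{main:thm} is satisfied for free.

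Consequently, any linear order $u_{0},u_{1},\ldots,u_{n-1}$ of $V(T)$ satisfying conditions (1) and (2) of the corollary automatically satisfies all three conditions of Theorem \ref{main:thm}. Applying Theorem \ref{main:thm} then yields both the equality (\ref{eq:dbp2}) and the optimality of the coloring defined by (\ref{eq:f00}) and (\ref{eq:f11}), which are identical to (\ref{eq:f0}) and (\ref{eq:f1}). There is no substantive obstacle: the corollary is a direct specialization of the main theorem to the $DB(n/2)$ setting, where the metric constraint on consecutive vertices becomes vacuous, so the proof should be only a couple of lines invoking Theorem \ref{main:thm}.
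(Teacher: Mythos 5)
Your proposal is correct and matches the paper's own proof: the paper likewise observes that for a tree $\max\{D(u,v) : u,v \in V(T),\, u \neq v\} \leq d \leq n/2$, so condition (3) of Theorem \ref{main:thm} is automatic and the corollary follows directly from that theorem. Nothing further is needed.
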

\begin{proof} The proof is straight forward by Theorem \ref{main:thm} as for any tree $T$, max\{$D(u,v) : u, v \in V(G), u \neq v$\} $\leq$ $d$ $\leq$ $n/2$.
\end{proof}

\section{Hamiltonian coloring of some families of tree}

\indent\indent In this section, we determine the hamiltonian chromatic number for three families of tree using Corollary \ref{dbp2:thm}. We continue to use terminology and notation defined in the previous section.

\indent A \emph{symmetric tree} is a tree in which all vertices other than leaves (degree-one vertices) have the same degree and all leaves have the same eccentricity. Let $k, d \geq 2$ be integers. We denote the symmetric tree with diameter $d$ and non-leaf vertices having degree $k+1$ by $T_{k+1}(d)$. A $k$-star is a tree consisting of $k$ leaves and another vertex joined to all leaves by edges. We define the $(n,k)$-\emph{firecracker trees}, denoted by $F(n,k)$, to be the tree obtained by taking $n$ copies of a $(k-1)$-star and identifying a leaf of each of them to a different vertex of a path of length $n-1$. A tree is said to be a \emph{caterpillar} $C$ if it consists of a path $v_{1}v_{2}...v_{m}(m \geq 3)$, called the spine of $C$, with some hanging edges known as legs, which are incident to the inner vertices $v_{2}$,$v_{3}$,...,$v_{m-1}$. If $d(v_{i})$ = $k$ for $i = 2,3,...,m-1$, then we denote the caterpillar by $C(m,k)$, where $d(v_{i})$ denotes the degree of $v_{i}$. For all above defined trees it is easy to verify that $d \leq n/2$, and hence $DB(n/2)$ trees as max\{$D(u,v)$:$u,v \in V(T)$\} $\leq$ $d$ $\leq$ $n/2$.

\indent Now we determine the hamiltonian chromatic number for above defined trees using Corollary \ref{dbp2:thm}. Note that for this purpose it is enough to give a linear order $u_{0},u_{1},...,u_{n-1}$ of vertices of $T$ which satisfies conditions of Corollary \ref{dbp2:thm}.

\begin{theorem} Let $k, d \geq 2$ be integers. Then $hc(T_{k+1}(d))$
\begin{equation} \label{sym:hc}
= \left\{
\begin{array}{ll}
\frac{(k+1)^{2}}{(k-1)^{2}}(k^{\frac{d}{2}}-1)\left[(k^{\frac{d}{2}}-1)+\frac{1}{k+1}(2-(k-1)d)\right]-\frac{k+1}{k-1}d+1, & \mbox{if $d$ is even}, \\ [0.3cm]
\frac{4k}{(k-1)^{2}}(k^{\frac{d-1}{2}}-1)\left[k(k^{\frac{d-1}{2}}-1)+1\right]+\frac{2k}{k-1}(2-d)k^{\frac{d-1}{2}}-\frac{2k}{k-1}, & \mbox{if $d$ is odd.}
\end{array}
\right.
\end{equation}
\end{theorem}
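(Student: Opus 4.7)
The plan is to invoke Corollary \ref{dbp2:thm}. Since the excerpt already notes that $T_{k+1}(d)$ is a $DB(n/2)$ tree, the work reduces to (i) computing the order $n$ and the total detour level $\mathcal{L}(T)$ in closed form, (ii) exhibiting a linear ordering of $V(T)$ that satisfies conditions (1) and (2) of the corollary, and (iii) simplifying the resulting expression $(n-1)(n-1-\varepsilon(T))+\varepsilon'(T)-2\mathcal{L}(T)$ into the stated form (\ref{sym:hc}).

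For (i), I split on the parity of $d$. When $d$ is even, $T_{k+1}(d)$ has a unique central vertex $w$, level $\ell$ holds $(k+1)k^{\ell-1}$ vertices for $1 \leq \ell \leq d/2$, so $\mathcal{L}(T) = (k+1)\sum_{\ell=1}^{d/2} \ell\, k^{\ell-1}$. When $d$ is odd there are two central vertices $w,w'$, level $\ell \geq 1$ holds $2k^{\ell}$ vertices up to $\ell = (d-1)/2$, and $\mathcal{L}(T) = 2k\sum_{\ell=1}^{(d-1)/2} \ell\, k^{\ell-1}$. Both sums collapse via the standard identity $\sum_{\ell=1}^{m} \ell\, k^{\ell-1} = (m\, k^{m+1} - (m+1)k^{m} + 1)/(k-1)^{2}$. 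For (ii), in the even case I set $u_{0} = w$ and then cycle through the $k+1 \geq 3$ equal-sized branches in round-robin order, listing the vertices of each branch from level $d/2$ down to level $1$, so that $u_{n-1}$ is a level-$1$ vertex and hence a neighbor of $w$. In the odd case I set $u_{0} = w$, $u_{n-1} = w'$, and strictly alternate between the $w$-side and the $w'$-side branches; this is consistent because $n$ is even and the two sides each contain exactly $(n-2)/2$ non-central vertices. Both orderings fulfil conditions (1)--(2), and the corollary then delivers $hc(T_{k+1}(d)) = (n-1)(n-1-\varepsilon(T))+\varepsilon'(T)-2\mathcal{L}(T)$.

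Step (iii) is the main obstacle, though it is purely algebraic rather than conceptual. I substitute the closed forms and simplify separately in each parity. For $d$ even ($\varepsilon(T)=0$, $\varepsilon'(T)=1$), factoring $(k-1)^{-2}$ and grouping the coefficients of $k^{d/2}$ and of $d$ reduces the right-hand side to the first branch of (\ref{sym:hc}). For $d$ odd ($\varepsilon(T)=1$, $\varepsilon'(T)=0$), grouping by powers of $k^{(d-1)/2}$ gives the second branch. The manipulations amount to routine geometric-series bookkeeping; the only subtle point is carefully tracking the low-order contributions involving $d$, $k+1$, and $k-1$ when combining $(n-1)^{2}$ (or $(n-1)(n-2)$) with $2\mathcal{L}(T)$.
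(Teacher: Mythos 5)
Your proposal is correct and follows essentially the same route as the paper: both invoke Corollary \ref{dbp2:thm}, order the vertices by a round-robin sweep through the $k+1$ branches (deepest level first, ending at a neighbour of $w$) when $d$ is even and by strict alternation between the two sides when $d$ is odd, and then substitute the same closed forms for $n$ and $\mathcal{L}(T_{k+1}(d))$ obtained from the identity for $\sum_{\ell=1}^{m}\ell k^{\ell-1}$. No substantive difference in approach or gap to report.
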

\begin{proof} Note that $T_{k+1}(d)$ has one or two central vertex/vertices depending on $d$ and hence we consider the following two cases.

\indent\textsf{Case 1}: $d$ is even.

In this case $T_{k+1}(d)$ has a unique central vertex, denoted by $w$. Denote the children of the central vertex $w$ by $w^{1}, w^{2}, \ldots, w^{k+1}$. Denote the $k$ children of each $w^{t}$ by $w_{0}^{t}, w_{1}^{t}, \ldots, w_{k-1}^{t}$, $1 \le t \le k+1$. Denote the $k$ children of each $w_{i}^{t}$ by $w_{i0}^{t}, w_{i1}^{t}, \ldots, w_{i(k-1)}^{t}$, $0 \leq i \leq k-1$, $1 \leq t \leq k+1$. Inductively, denote the $k$ children of $w_{i_{1},i_{2},\ldots,i_{l}}^{t}$ ($0 \leq i_{1}, i_{2}, \ldots, i_{l} \leq k-1$, $1 \leq t \leq k+1$) by $w_{i_{1}, i_{2},\ldots,i_{l}, i_{l+1}}^{t}$ where $0 \le i_{l+1} \le k-1$. Continue this until all vertices of $T_{k+1}(d)$ are indexed this way. We then rename the vertices of $T_{k+1}(d)$ as follows:

For $1 \leq t \leq k+1$, set
$$
v_{j}^{t} := w_{i_{1},i_{2}, \ldots, i_{l}}^{t},\;\, \mbox{where}\;\, j = 1 + i_{1} + i_{2}k + \cdots + i_{l}k^{l-1} + \sum_{l+1 \leq t \leq \lfloor d/2 \rfloor} k^{t}.
$$

We give a linear order $u_{0}, u_{1}, \ldots, u_{n-1}$ of the vertices of $T_{k+1}(d)$ as follows. We first set $u_{0} = w$. Next, for $1 \leq j \leq n-k-2$, let
$$
u_{j} := \left\{
\begin{array}{ll}
v_{s}^{t},\;\,\mbox{where $s = \lceil j/(k+1) \rceil$}, & \mbox{if $j \equiv t$ (mod $(k+1)$) for $t$ with $1 \le t \le k$}, \\ [0.3cm]
v_{s}^{k+1},\;\,\mbox{where $s = \lceil j/(k+1) \rceil$}, & \mbox{if $j \equiv 0$ (mod $(k+1)$)}.
\end{array}
\right.
$$
Finally, let
$$
u_{j} := w^{j-n+k+2},\;\, n-k-1 \leq j \leq n-1.
$$
Note that $u_{n-1} = w^{k+1}$ is adjacent to $w$, and for $1 \leq i \leq n-2$, $u_{i}$ and $u_{i+1}$ are in different branches so that $\phi(u_{i},u_{i+1}) = 0$.

\textsf{Case 2}: $d$ is odd.

In this case $T_{k+1}(d)$ has two (adjacent) central vertices, denoted by $w$ and $w'$. Denote the neighbours of $w$ other than $w'$ by $w_{0}, w_{1}, \ldots, w_{k-1}$ and the neighbours of $w'$ other than $w$ by $w'_{0}, w'_{1}, \ldots, w'_{k-1}$. For $0 \le i \le k-1$, denote the $k$ children of each $w_{i}$ (respectively, $w'_{i}$) by $w_{i0}, w_{i1}, \ldots, w_{i(k-1)}$ (respectively, $w'_{i0}, w'_{i1}, \ldots, w'_{i(k-1)}$). Inductively, for $0 \leq i_{1},i_{2},\ldots,i_{l} \leq k-1$, denote the $k$ children of $w_{i_{1},i_{2}, \ldots, i_{l}}$ (respectively, $w'_{i_{1},i_{2},\ldots,i_{l}}$) by $w_{i_{1},i_{2},\ldots,i_{l},i_{l+1}}$ (respectively, $w'_{i_{1},i_{2},\ldots,i_{l},i_{l+1}}$), where $0 \leq i_{l+1} \leq k-1$.
We rename
$$
v_{j} := w_{i_{1},i_{2},\ldots,i_{l}},\;\, v'_{j} := w'_{i_{1},i_{2},\ldots,i_{l}},\;\,\mbox{where}\;\, j = 1 + i_{1} + i_{2}k + \cdots + i_{l}k^{l-1} + \sum_{l+1 \leq t \leq \lfloor d/2 \rfloor} k^{t}.
$$
We give a linear order $u_{0}, u_{1}, \ldots, u_{n-1}$ of the vertices of $T_{k+1}(d)$ as follows. We first set
$$
u_0 := w,\;\, u_{n-1} := w',
$$
and for $1 \leq j \leq n-2$, let
\begin{eqnarray*}
u_j := \left\{
\begin{array}{ll}
v_{s},\;\,\mbox{where $s = \lceil j/2 \rceil$}, & \mbox{if $j \equiv 0$ (mod $2$)}\\ [0.3cm]
v'_{s},\;\,\mbox{where $s = \lceil j/2 \rceil$}, & \mbox{if $j \equiv 1$ (mod $2$)}.
\end{array}
\right.
\end{eqnarray*}
Then $u_{i}$ and $u_{i+1}$ are in opposite branches for $1 \leq i \leq n-2$, and $u_{i+2j}$, $j$=0,1,...,$(k-1)$  are in different branches for $1 \leq i \leq n-2k+1$, so that $\phi(u_{i},u_{i+1}) = 0$ and $\delta(u_{i},u_{i+1}) = 1$.

Therefore, in each case above, a defined linear order of vertices satisfies the conditions of Corollary \ref{dbp2:thm}. The hamiltonian coloring defined by (\ref{eq:f00}) and (\ref{eq:f11}) is an optimal hamiltonian coloring whose span equal to the right-hand side of (\ref{eq:dbp2}). But it is straight forward to verify that the order of $T_{k+1}(d)$ is given by
\begin{equation}
\label{sym:n}
n := \left\{
\begin{array}{ll}
1+\frac{k+1}{k-1} (k^{\frac{d}{2}} - 1), & \mbox{if $d$ is even}, \\ [0.3cm]
2 \left(1 + \frac{k}{k-1} (k^{\frac{d-1}{2}} - 1)\right), & \mbox{if $d$ is odd}.
\end{array}
\right.
\end{equation}
With the help of formula $1 + 2x + 3x^{2} + ... + px^{p-1} = \frac{px^{p}}{x-1} - \frac{x^{p}-1}{(x-1)^{2}}$, one can verify that the total level of $T_{k+1}(d)$ is given by
\begin{equation}
\label{sym:l}
\mathcal{L}(T_{k+1}(d)) := \left\{
\begin{array}{ll}
(k+1)\left(\frac{d k^{\frac{d}{2}}}{2(k-1)} - \frac{k^{\frac{d}{2}}-1}{(k-1)^{2}}\right), & \mbox{if $d$ is even} \\ [0.3cm]
2k\left(\frac{(d-1) k^{\frac{d-1}{2}}}{2(k-1)} - \frac{k^{\frac{d-1}{2}}-1}{(k-1)^{2}}\right), & \mbox{if $d$ is odd}.
\end{array}
\right.
\end{equation}
By substituting (\ref{sym:n}) and (\ref{sym:l}) into (\ref{eq:dbp2}), we obtain the right-hand side of (\ref{sym:hc}) is the hamiltonian chromatic number of $T_{k+1}(d)$. \\
\end{proof}

\begin{theorem} For $m \geq 3$ and $k \geq 4$,
\begin{equation} \label{fire:hc}
hc(F(m,k)) = \left\{
\begin{array}{ll}
m^{2}k^{2}-6m(k-1)-\frac{k}{2}(m^{2}-1)+2, & \mbox{if $m$ is odd}, \\ [0.3cm]
m^{2}k^{2}-6m(k-1)-\frac{k}{2}m^{2}+2, & \mbox{if $m$ is even.}
\end{array}
\right.
\end{equation}
\end{theorem}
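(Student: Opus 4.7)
The plan is to apply Corollary \ref{dbp2:thm}. Since $F(m,k)$ has diameter $d = m+3 \leq mk/2 = n/2$ whenever $m \geq 3$ and $k \geq 4$ (because $m(k-2) \geq 6$), it is a $DB(n/2)$ tree, and it suffices to (i) locate the center, (ii) compute $\mathcal{L}(T)$ in closed form, and (iii) exhibit a linear order of its $n = mk$ vertices meeting conditions (1) and (2) of the corollary.

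First I would locate the center. The diameter is realized by paths joining non-spine leaves of the two end stars, and the one/two-central-vertex dichotomy reduces to the parity of $m$: $C(T)=\{v_{(m+1)/2}\}$ when $m$ is odd and $C(T)=\{v_{m/2},v_{m/2+1}\}$ when $m$ is even. I then read off $\mathcal{L}$ on each vertex type: a spine vertex $v_i$ at distance $j$ from the nearest central vertex has $\mathcal{L}(v_i)=j$, the star centre $c_i$ satisfies $\mathcal{L}(c_i)=j+1$, and each of the $k-2$ non-spine leaves of star $i$ has level $j+2$. Summing by type (using $\sum_{i=1}^{m}|i-r|$ on the spine with the appropriate $r$) gives $\mathcal{L}(T)=\frac{k(m^{2}-1)}{4}+m(2k-3)$ in the odd case and $\mathcal{L}(T)=\frac{km(m-2)}{4}+m(2k-3)$ in the even case.

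Next I would construct the ordering. When $m$ is even, the subtrees hanging off $w=v_{m/2}$ and $w'=v_{m/2+1}$ each contain exactly $mk/2-1$ non-central vertices, so setting $u_0:=w$, $u_{n-1}:=w'$ and zig-zagging between a $w$-side vertex and a $w'$-side vertex (in any internal order) automatically places every consecutive pair in opposite branches. When $m$ is odd there are three branches at $w=v_{(m+1)/2}$: the left and right spine branches $B_L,B_R$ of common size $(m-1)k/2$ and the small star branch $B_M$ of size $k-1$. Because $|B_L|=|B_R|\geq|B_M|$, I would alternate $B_L,B_R,B_L,B_R,\ldots$ and insert the $k-1$ vertices of $B_M$ at well-separated positions along the sequence, arranging to finish with $u_{n-1}:=v_{(m-1)/2}\in N(w)$, so that every consecutive pair sits in different branches.

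Finally, Corollary \ref{dbp2:thm} yields $hc(F(m,k))=(n-1)(n-1-\varepsilon(T))+\varepsilon'(T)-2\mathcal{L}(T)$, and substituting $n=mk$ together with the values of $\mathcal{L}(T)$ above reduces by direct algebra to the piecewise formula (\ref{fire:hc}). I expect the main obstacle to be making the three-branch interleaving fully explicit in the odd case while simultaneously constraining $u_{n-1}$ to be a neighbor of $w$; the hypothesis $k\geq 4$ is exactly what creates enough slack in the large branches to absorb the $B_M$ insertions without forcing two members of $B_M$ or of the same spine branch to become adjacent in the order.
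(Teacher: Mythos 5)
Your proposal is correct and follows essentially the same route as the paper: verify that $F(m,k)$ is a $DB(n/2)$ tree, locate the center by parity of $m$, compute $n=mk$ and $\mathcal{L}(F(m,k))$ (your closed forms agree with the paper's $\frac{km^{2}+(8k-12)m-k}{4}$ and $\frac{km^{2}+6m(k-2)}{4}$), and apply Corollary \ref{dbp2:thm} with an ordering that alternates between branches; the paper merely writes that ordering out with explicit index formulas where you give an interleaving feasibility argument. One small quibble: the hypothesis $k\geq 4$ is needed for the diameter bound $m+3\leq mk/2$ (when $m\leq 5$), not for slack in the three-branch interleaving, which works whenever $|B_L|=|B_R|\geq |B_M|$.
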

\begin{proof} Let $w^{i}_{1}$,$w^{i}_{2}$,...,$w^{i}_{k}$ denote the vertices of the $i^{th}$ copy of the $(k-1)$-star in $F(m,k)$, where $w^{i}_{1}$ is the apex vertex (center) and $w^{i}_{2}$,...,$w^{i}_{k}$ are the leaves. Without loss of generality we assume that $w^{1}_{k}$, $w^{2}_{k}$,...,$w^{m}_{k}$ are identified to the vertices in the path of length $m-1$ in the definition of $F(m,k)$. Note that $F(m,k)$ has one or two central vertex/vertices depending on $m$ and hence we consider the following two cases.

\textsf{Case-1:} $m$ is odd.

In this case $F(m,k)$ has only one central vertex $w$ which is $w^{\lfloor\frac{m}{2}\rfloor}_{k}$. We give a linear order $u_{0}$, $u_{1}$,...,$u_{n-1}$ of the vertices of $F(m,k)$ as follows. We first set $u_{0}$ = $w$ = $w_{k}^{\lfloor\frac{m}{2}\rfloor}$. Next, for $1 \leq t \leq n-m$, let
\begin{eqnarray*}
u_{t} := w^{i}_{j}, \mbox{ where  } t = \left\{
\begin{array}{ll}
(j-1)m + (i-\lfloor\frac{m}{2}\rfloor), & \mbox{if } i = \lfloor\frac{m}{2}\rfloor \\ [0.3cm]
(j-1)m + 2i, & \mbox{if } i < \lfloor\frac{m}{2}\rfloor \\ [0.3cm]
(j-1)m + 2(i-\lfloor\frac{m}{2}\rfloor)+1, & \mbox{if } i > \lfloor\frac{m}{2}\rfloor.
\end{array}
\right.
\end{eqnarray*}

Finally, for $n-m+1 \leq t \leq n-1$, let
\begin{eqnarray*}
u_{t} := w^{i}_{j}, \mbox{ where  } t = \left\{
\begin{array}{ll}
(j-1)m - 2(i-\lfloor\frac{m}{2}\rfloor)+1, & \mbox{if } i < \lfloor\frac{m}{2}\rfloor \\ [0.3cm]
(j-1)m + 2(m-i+1), & \mbox{if } i > \lfloor\frac{m}{2}\rfloor.
\end{array}
\right.
\end{eqnarray*}

\textsf{Case-2:} $m$ is even.

In this case $F(m,k)$ has two central vertices $w$ and $w^{'}$ which are $w_{k}^{\frac{m}{2}}$ and $w_{k}^{\frac{m}{2}+1}$ respectively. We give a linear order $u_{0}$, $u_{1}$,...,$u_{n-1}$ of the vertices of $F(m,k)$ as follows. We first set $u_{0}$ = $w^{'}$ = $w_{k}^{\frac{m}{2}+1}$ and $u_{n-1}$ = $w$ = $w_{k}^{\frac{m}{2}}$. Next, for $1 \leq t \leq n-m+1$, let
\begin{eqnarray*}
u_{t} := w^{i}_{j}, \mbox{ where  } t = \left\{
\begin{array}{ll}
(j-1)m + 2i - 1, & \mbox{if } i \leq \frac{m}{2} \\ [0.3cm]
(j-1)m + 2(i-\frac{m}{2}), & \mbox{if } i > \frac{m}{2}.
\end{array}
\right.
\end{eqnarray*}

Finally, for $n-m+2 \leq t \leq n-2$, let
\begin{eqnarray*}
u_{t} := w^{i}_{j}, \mbox{ where  } t = \left\{
\begin{array}{ll}
(j-1)m + 2i - 1, & \mbox{if } i < \frac{m}{2} \\ [0.3cm]
(j-1)m + 2(i-1-\frac{m}{2}), & \mbox{if } i > \frac{m}{2}+1.
\end{array}
\right.
\end{eqnarray*}

Therefore, in each case above, a defined linear order of vertices satisfies conditions of Corollary \ref{dbp2:thm}. The hamiltonian coloring defined by (\ref{eq:f00}) and (\ref{eq:f11}) is an optimal hamiltonian coloring whose span equal to the right-hand side of (\ref{eq:dbp2}). But the order and total level of firecrackers $F(m,k)$ are given by
\begin{equation} n := mk \label{fire:p} \end{equation}
\begin{equation} \label{fire:l}
\mathcal{L}(F(m,k)) := \left\{
\begin{array}{ll}
\frac{km^{2}+(8k-12)m-k}{4}, & \mbox{if $m$ is odd}, \\ [0.3cm]
\frac{km^{2}+6m(k-2)}{4}, & \mbox{if $m$ is even.}
\end{array}
\right.
\end{equation}
By substituting (\ref{fire:p}) and (\ref{fire:l}) into (\ref{eq:dbp2}), we obtain the right-hand side of (\ref{fire:hc}) is the hamiltonian chromatic number of $F(m,k)$.
\end{proof}

\begin{theorem} Let $m, k \geq 3$. Then $hc(C(m,k))$
\begin{equation} \label{cater:hc}
= \left\{
\begin{array}{ll}
(m-2)^{2}k^{2}-\frac{1}{2}(5m^{2}-20m+19)k+\frac{1}{2}(3m^{2}-12m+11), & \mbox{if $m$ is odd}, \\ [0.3cm]
(m-2)^{2}k^{2}-\frac{1}{2}(5m^{2}-20m+20)k+\frac{1}{2}(3m^{2}-12m+12), & \mbox{if $m$ is even.}
\end{array}
\right.
\end{equation}
\end{theorem}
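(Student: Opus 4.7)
The plan is to apply Corollary \ref{dbp2:thm}. A direct count shows that $C(m,k)$ has order $n=m+(m-2)(k-2)=mk-m-2k+4$ and diameter $d=m-1$, so the inequality $d\le n/2$ reduces to $(m-2)(k-3)\ge 0$, which holds for $m,k\ge 3$; hence $C(m,k)$ is a $DB(n/2)$ tree and Corollary \ref{dbp2:thm} applies. When $m$ is odd the unique central vertex is $w=v_{(m+1)/2}$ (so $\varepsilon(T)=0$); when $m$ is even the two adjacent central vertices are $w=v_{m/2}$ and $w^{'}=v_{m/2+1}$ (so $\varepsilon(T)=1$).

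The principal step is to exhibit a linear order $u_{0},u_{1},\ldots,u_{n-1}$ of $V(C(m,k))$ satisfying conditions (1)--(2) of Corollary \ref{dbp2:thm}. With $c=(m+1)/2$ (resp.\ $m/2$) the branches at the center(s) split into (i) the left spine branch, made of $v_{c-1},\ldots,v_{1}$ together with the legs hanging at $v_{2},\ldots,v_{c-1}$; (ii) the symmetric right spine branch; and (iii) $k-2$ single-vertex leg branches at each central vertex. Mimicking the round-robin scheme used above for the firecracker trees, I take $u_{0}=w$ (and $u_{n-1}=w^{'}$ when $m$ is even) and list the remaining vertices by cycling through the branches level by level so that any two consecutive vertices lie in different (resp.\ opposite) branches; when $m$ is odd I end with $u_{n-1}$ equal to a leg of $w$ (or to a spine-neighbour of $w$ when the center carries no leg of its own). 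A closed-form formula $u_{t}=\cdots$, in the style of the piecewise formulas in the firecracker proof and split according to whether the target vertex lies on the left side, the right side, or at a central leg, is then written down.

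The main obstacle is precisely this combinatorial book-keeping: explicit indexing formulas must be given and each of the $n-1$ consecutive pairs $u_{t},u_{t+1}$ must be checked to lie in different (resp.\ opposite) branches, which is subtler than for the symmetric tree because at each center the long spine branches coexist with several single-leg branches of very different sizes. Once such an ordering is in place Corollary \ref{dbp2:thm} immediately delivers $hc(C(m,k))=(n-1)(n-1-\varepsilon(T))+\varepsilon^{'}(T)-2\mathcal{L}(T)$.

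Finally, summing the detour levels using $\mathcal{L}(v_{i})=|i-(m+1)/2|$ when $m$ is odd and $\mathcal{L}(v_{i})=\mbox{min}\{|i-m/2|,|i-m/2-1|\}$ when $m$ is even, together with the fact that each of the $k-2$ legs of an inner vertex $v_{i}$ sits one level deeper than $v_{i}$, yields
\[
\mathcal{L}(C(m,k))=\frac{(m^{2}-1)+(k-2)(m^{2}-5)}{4}\quad\mbox{if $m$ is odd},
\]
\[
\mathcal{L}(C(m,k))=\frac{m(m-2)(k-1)}{4}\quad\mbox{if $m$ is even}.
\]
Substituting these expressions and $n=mk-m-2k+4$ into the formula of Corollary \ref{dbp2:thm} and performing straightforward polynomial simplification recovers the two cases of (\ref{cater:hc}).
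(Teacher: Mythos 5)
Your proposal follows exactly the paper's route: verify that $C(m,k)$ is a $DB(n/2)$ tree, invoke Corollary \ref{dbp2:thm}, and then substitute $n$ and $\mathcal{L}(C(m,k))$ into $(n-1)(n-1-\varepsilon(T))+\varepsilon^{'}(T)-2\mathcal{L}(T)$. All of the supporting arithmetic you supply is correct and agrees with the paper: $n=mk-m-2k+4$ coincides with the paper's $m(k-1)-2(k-2)$; your reduction of $d\le n/2$ to $(m-2)(k-3)\ge 0$ makes explicit a claim the paper merely asserts; your $\mathcal{L}(C(m,k))=\frac{(m^{2}-1)+(k-2)(m^{2}-5)}{4}$ for odd $m$ equals the paper's $\frac{(m^{2}-5)(k-1)}{4}+1$, the even case matches verbatim, and the final polynomial simplification checks out in both parities.

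The genuine gap is the step you yourself flag as ``the main obstacle'': the explicit linear order $u_{0},\ldots,u_{n-1}$ satisfying conditions (1) and (2) of Corollary \ref{dbp2:thm} is never actually constructed --- you describe a round-robin scheme and then say a closed-form formula ``is then written down'', but it is not. That construction and its verification are the entire content of the paper's proof (the paper gives piecewise index formulas for the spine vertices and for the legs in each parity case), so as written your argument establishes only that the stated value would follow \emph{if} such an order exists. The gap is fillable, and your sketch points in the right direction: for odd $m$ the two spine branches at $w=v_{(m+1)/2}$ have equal size by symmetry, so a strict left/right alternation exhausts them with consecutive vertices in different branches, after which the $k-2\ge 1$ pendant legs of $w$ can be appended (each is a distinct singleton branch, so consecutive legs are also in different branches, and the last one lies in $N(w)$ as condition (1) requires); for even $m$ the vertices split into a $w$-side and a $w^{'}$-side of equal cardinality $(n-2)/2$, so strict alternation between the two sides puts every consecutive pair in opposite branches. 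You should write these orders down explicitly (or at least give this counting argument for their existence) before the appeal to Corollary \ref{dbp2:thm} is justified.
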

\begin{proof} Let $v_{1}$, $v_{2}$,...,$v_{m}$ be the vertices of spine and $v_{i}^{j}$, $1 \leq j \leq k-2$ are pendent vertices at $i^{th}$, $2 \leq i \leq m-1$ vertex of spine. Note that $C(m,k)$ has one or two central vertex/vertices depending on $m$ and hence we consider the following two cases.

\textsf{Case-1:} $m$ is odd.

In this case $C(m,k)$ has only one central vertex which is $v_{\lfloor\frac{m}{2}\rfloor}$ = $w$. We first set $u_{0}$ = $v_{\lfloor\frac{m}{2}\rfloor+1}$, $u_{n-1}$ = $w$ and other vertices as follows.

For $1 \leq t \leq m-2$,
\begin{eqnarray*}
u_{t} := v_{i}, \mbox{ where } t = \left\{
\begin{array}{ll}
2i-1, & \mbox{if $i < \lfloor\frac{m}{2}\rfloor$}, \\ [0.3cm]
2(i-\lfloor\frac{m}{2}\rfloor), & \mbox{if $i > \lfloor\frac{m}{2}\rfloor+1$.}
\end{array}
\right.
\end{eqnarray*}

For $m-1 \leq t \leq n-1$,
\begin{eqnarray*}
u_{t} := v_{i}^{j}, \mbox{ where } t = \left\{
\begin{array}{ll}
(m-2)j+2(i-1), & \mbox{if $i < \lfloor\frac{m}{2}\rfloor$}, \\ [0.3cm]
(m-2)j+1, & \mbox{if $i = \lfloor\frac{m}{2}\rfloor$}, \\ [0.3cm]
(m-2)j+2(i-\lfloor\frac{m}{2}\rfloor)+1, & \mbox{if $i > \lfloor\frac{m}{2}\rfloor$.}
\end{array}
\right.
\end{eqnarray*}

\textsf{Case-2:} $m$ is even.

In this case $C(m,k)$ has two central vertices which are $v_{\frac{m}{2}}$ = $w$ and $v_{\frac{m}{2}+1}$ = $w^{'}$. We first set $u_{0}$ = $v_{\frac{m}{2}+1}$, $u_{n-1}$ = $\frac{m}{2}$ and other vertices as follows.

For $1 \leq t \leq m-2$,
\begin{eqnarray*}
u_{t} := v_{i}, \mbox{ where } t = \left\{
\begin{array}{ll}
2i-1, & \mbox{if $i < \frac{m}{2}-1$}, \\ [0.3cm]
2(i-\frac{m}{2}), & \mbox{if $i > \frac{m}{2}+1$.}
\end{array}
\right.
\end{eqnarray*}

For $m-1 \leq t \leq n-1$,
\begin{eqnarray*}
u_{t} := v_{i}^{j}, \mbox{ where } t = \left\{
\begin{array}{ll}
(m-2)j+2(i-2)+1, & \mbox{if $i \leq \frac{m}{2}$}, \\ [0.3cm]
(m-2)j+2(i-\frac{m}{2}), & \mbox{if $i > \frac{m}{2}$.}
\end{array}
\right.
\end{eqnarray*}

Therefore, in each case above, a defined linear order of vertices satisfies conditions of Corollary \ref{dbp2:thm}. The hamiltonian coloring defined by (\ref{eq:f00}) and (\ref{eq:f11}) is an optimal hamiltonian coloring whose span equal to the right-hand side of (\ref{eq:dbp2}). But the order and total level of caterpillars $C(m,k)$ are given by
\begin{equation} \label{cater:p} n := m(k-1)-2(k-2) \end{equation}
\begin{equation} \label{cater:l}
\mathcal{L}(C(m,k)) := \left\{
\begin{array}{ll}
\frac{(m^{2}-5)(k-1)}{4}+1, & \mbox{if $m$ is odd}, \\ [0.3cm]
\frac{m(m-2)(k-1)}{4}, & \mbox{if $m$ is even.}
\end{array}
\right.
\end{equation}
By substituting (\ref{cater:p}) and (\ref{cater:l}) into (\ref{eq:dbp2}), we obtain the right-hand side of (\ref{cater:hc}) is the hamiltonian chromatic number of $C(m,k)$.
\end{proof}

We remark that Theorem \ref{main:thm} is also useful to determine hamiltonian chromatic number of non $DB(n/2)$ trees. See the following result.
\begin{theorem} Let $P_{m}^{'}$ be a tree obtained by attaching a pendant vertex to central vertex/vertices of path $P_{m}$. Then
\begin{equation}\label{hc:Pn}
hc(P_{m}^{'}) := \left\{
\begin{array}{ll}
\frac{1}{2}(m^{2}-1), & \mbox{if $m$ is odd}, \\ [0.3cm]
\frac{m^{2}}{2}+2m-4, & \mbox{if $m$ is even}.
\end{array}
\right.
\end{equation}
\end{theorem}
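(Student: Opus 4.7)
The plan is to apply Theorem \ref{main:thm} directly. Notice first that $P_m'$ is in general not a $DB(n/2)$ tree, since its diameter $m-1$ exceeds $n/2 = (m+1)/2$ (case $m$ odd, where $n = m+1$) or $n/2 = (m+2)/2$ (case $m$ even, where $n = m+2$; I interpret the statement as attaching one pendant to each of the two central vertices, which is the only reading consistent with the claimed formula). Corollary \ref{dbp2:thm} therefore does not apply, and the real work is to exhibit a linear ordering satisfying the three conditions of Theorem \ref{main:thm}, the hardest of which is condition (3), $D(u_i, u_{i+1}) \le n/2$.

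For $m$ odd, write $w = v_{(m+1)/2}$ for the central vertex and $v_0$ for the pendant attached to it. The two path-endpoints $v_1, v_m$ are the unique vertices at the maximum detour level $(m-1)/2$, and since $D(v_1, v_m) = m-1 > n/2$, they must each be adjacent in the ordering only to vertices of detour level at most $1$. I would therefore sandwich the pendant between them at the start, taking
\[u_0 = w,\quad u_1 = v_1,\quad u_2 = v_0,\quad u_3 = v_m,\quad u_4 = v_{(m-1)/2},\]
and for $5 \le i \le n-1$ alternating the right and left paths \emph{inwards} toward the center: $u_i = v_{m-(i-3)/2}$ for odd $i$ and $u_i = v_{(m-1)/2 - (i-4)/2}$ for even $i$. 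A routine calculation shows that the consecutive level sums $\mathcal{L}(u_i) + \mathcal{L}(u_{i+1})$ alternate between $(m-1)/2$ and $(m+1)/2$ once past the initial segment, hence are always bounded by $(m+1)/2 = n/2$; since consecutive vertices lie in different branches, $D(u_i, u_{i+1}) = \mathcal{L}(u_i) + \mathcal{L}(u_{i+1}) \le n/2$.

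For $m$ even, write $w = v_{m/2}$, $w' = v_{m/2+1}$, with pendants $v_0, v_0'$ attached respectively. I would take $u_0 = w$ and $u_{n-1} = w'$ and fill positions $1, 3, \ldots, n-3$ (all on the $w'$-side) with $R_1 = v_m, R_2 = v_{m-1}, \ldots, R_{m/2-1} = v_{m/2+2}, R_{m/2} = v_0'$, and positions $2, 4, \ldots, n-2$ (all on the $w$-side) with $L_1 = v_{m/2-1}, L_2 = v_0$, and $L_i = v_{m/2-i+1}$ for $3 \le i \le m/2$. Consecutive vertices then always lie on opposite sides of the $ww'$ edge, contributing $\delta(u_i, u_{i+1}) = 1$, and a direct check gives $\mathcal{L}(u_i) + \mathcal{L}(u_{i+1}) \le m/2$, whence $D(u_i, u_{i+1}) \le m/2 + 1 = n/2$.

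Conditions (1) and (2) of Theorem \ref{main:thm} are visible from the definitions in both cases, so Theorem \ref{main:thm} yields $hc(P_m')$ equal to the right-hand side of (\ref{eq:main}). An elementary summation gives $\mathcal{L}(P_m') = 1 + (m^2-1)/4$ for $m$ odd and $\mathcal{L}(P_m') = m^2/4 - m/2 + 2$ for $m$ even; substituting these together with the respective $n$ and $\varepsilon(T)$ into (\ref{eq:main}) produces (\ref{hc:Pn}). The hard part is condition (3): since the diameter strictly exceeds $n/2$, a naive alternation between branches would force the two longest-level vertices into adjacent positions, and it is the availability of the pendant(s) as level-$1$ vertices distinct from the path that permits the far leaves to be separated by low-level vertices throughout the ordering.
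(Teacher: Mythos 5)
Your proposal is correct and follows essentially the same route as the paper: both verify that Theorem \ref{main:thm} (not Corollary \ref{dbp2:thm}) must be invoked, exhibit an explicit linear ordering meeting its three conditions, and substitute $n$ and $\mathcal{L}(P_m')$ (your values agree with the paper's) into (\ref{eq:main}). Your orderings differ in detail from the paper's -- you buffer the two far endpoints with the pendant at the start and then sweep inward, while the paper interleaves the two sides pairing a far vertex on one side with a near vertex on the other -- but both satisfy conditions (1)--(3) and yield the same span.
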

\begin{proof} The order and total level of $P_{m}^{'}$ are given by
\begin{equation}\label{n:Pn}
n := \left\{
\begin{array}{ll}
m+1, & \mbox{if $m$ is odd}, \\ [0.3cm]
m+2, & \mbox{if $m$ is even}.
\end{array}
\right.
\end{equation}
\begin{equation}\label{L:Pn}
\mathcal{L}(P_{m}^{'}) := \left\{
\begin{array}{ll}
\frac{m^{2}+3}{4}, & \mbox{if $m$ is odd}, \\ [0.3cm]
\frac{m^{2}-2m+8}{4}, & \mbox{if $m$ is even}.
\end{array}
\right.
\end{equation}
Substituting (\ref{n:Pn}) and (\ref{L:Pn}) into (\ref{eq:lower}) we obtain that the right-hand side of (\ref{hc:Pn}) is a lower bound for $hc(P_{m}^{'})$. Now we give a linear ordering of vertices of $P_{m}^{'}$ which satisfies conditions of Theorem \ref{main:thm}. Note that $P_{m}^{'}$ has one central vertex when $m$ is odd and two adjacent central vertices when $m$ is even. Hence we consider the following two cases.

\textsf{Case-1:} $m$ is odd.

Let $v_{1}v_{2}...v_{m}$ be the vertices of path and $v^{'}$ be the vertex attached to central vertex $v_{(m+1)/2}$ then we order the vertices as follows:
\begin{eqnarray*}
v_{(m+1)/2}, v_{1}, v_{(m+3)/2}, v_{2}, v_{(m+5)/2}, v_{3}, v_{(m+7)/2},....,v_{(m-1)/2}, v_{m}, v^{'}.
\end{eqnarray*}
Rename the vertices of $P_{m}^{'}$ in the above ordering by $u_{0}$, $u_{1}$,...,$u_{n-1}$. Namely, let $u_{0}$ = $v_{(m+1)/2}$, $u_{1}$ = $v_{1}$,...,$u_{n-1}$ = $v^{'}$ then it satisfies conditions of Theorem \ref{main:thm}.

\textsf{Case-2:} $m$ is even.

Let $v_{1}v_{2}...v_{m}$ be the vertices of path and $v^{'}$ and $v^{''}$ are attached to central vertices $v_{m/2}$ and $v_{m/2+1}$ then we order the vertices as follows:
\begin{eqnarray*}
v_{m/2+1}, v_{1}, v_{m/2+2}, v_{2}, v_{m/2+3}, v_{3},....,v_{m/2-1}, v_{m}, v^{'}, v^{''}, v_{m/2}.
\end{eqnarray*}
Rename the vertices of $P_{m}^{'}$ in the above ordering by $u_{0}$, $u_{1}$,...,$u_{n-1}$. Namely, let $u_{0}$ = $v_{m/2+1}$, $u_{1}$ = $v_{1}$,...,$u_{n-1}$ = $v_{m/2}$ then it satisfies conditions of Theorem \ref{main:thm}.

Therefore, in each case above, a defined linear order of vertices of $P_{m}^{'}$ satisfies conditions of Theorem \ref{main:thm} and hence the hamiltonian coloring defined by (\ref{eq:f0}) and (\ref{eq:f1}) is an optimal hamiltonian coloring whose span is (\ref{eq:main}) which is (\ref{hc:Pn}) for the current case.
\end{proof}

\section*{Acknowledgement}
I want to express my deep gratitude to an anonymous referee for kind comments and constructive suggestions.

\end{document}